\documentclass[11pt]{article}

\usepackage{amsmath, amssymb, amsthm, verbatim,enumerate,bbm,color,mathtools} 

\usepackage{mathrsfs,enumitem}

\usepackage[backref,colorlinks,citecolor=blue,bookmarks=true]{hyperref}

\DeclareMathOperator{\PL}{PL}

\title{An Elementary Analysis of the Prime Partition Function}

\author{Asaf Cohen Antonir \thanks{School of Mathematical Sciences, Tel Aviv University, Tel Aviv, 6997801, Israel.
Email: asafc1$@$tauex.tau.ac.il} \and Asaf Shapira \thanks{
School of Mathematics, Tel Aviv University, Tel Aviv 69978, Israel.
Email: asafico$@$tau.ac.il. Supported in part by ISF Grant 1028/16 and ERC Starting Grant 633509.}}

\date{\today}
\allowdisplaybreaks

\theoremstyle{plain}
\newtheorem{theorem}{Theorem}[section]
\newtheorem{lemma}[theorem]{Lemma}

\makeatletter
\def\moverlay{\mathpalette\mov@rlay}
\def\mov@rlay#1#2{\leavevmode\vtop{%
   \baselineskip\z@skip \lineskiplimit-\maxdimen
   \ialign{\hfil$\m@th#1##$\hfil\cr#2\crcr}}}
\newcommand{\charfusion}[3][\mathord]{
    #1{\ifx#1\mathop\vphantom{#2}\fi
        \mathpalette\mov@rlay{#2\cr#3}
      }
    \ifx#1\mathop\expandafter\displaylimits\fi}
\makeatother

\makeatletter
\renewenvironment{proof}[1][\proofname]
{\par\pushQED{\qed}
	\normalfont\topsep6\p@\@plus6\p@\relax\trivlist
	\item[\hskip\labelsep\bfseries#1\@addpunct{.}]
	\ignorespaces}
{\popQED\endtrivlist\@endpefalse}
\makeatother

\RequirePackage{color}\definecolor{RED}{rgb}{1,0,0}\definecolor{BLUE}{rgb}{0,0,1} 

\begin{document}
\date{}
\maketitle

\begin{abstract}
Let $pp (n)$ denote the
number of ways to write $n$ as a sum of primes. In this paper, we show that
$$ 
\log pp (n) \sim 2\pi\sqrt{\frac{n}{3\log n}}\;.
$$
While sharper estimates are already known, they rely on highly involved and lengthy proofs. In sharp contrast, our approach uses a short, elementary recipe that easily adapts to yield similar asymptotic estimates for several related, extensively studied problems.
\end{abstract}

\section{Introduction}\label{sec:intro}

Let $p(n)$ denote the partition function of $n$, namely the number of sequences of positive integers $a_1\leq a_2\leq \ldots\leq a_k$ such that $\sum_{i=1}^{k}a_i=n$.
More generally, given $A$ a set of positive integers, we let $p_A(n)$ be the \emph{$A$-restricted partition function}, which counts the number of sequences $a_1\leq a_2\leq \ldots \leq a_k$ such that $\sum_{i=1}^{k}a_i=n$ and also for all $i$ we have $a_i\in A$.

The study of various properties
of restricted partition functions is amongst the oldest topics in Combinatorics and Number theory. Some classical examples are Euler's Pentagonal Numbers Theorem and the Rogers--Ramanujan identities (for a more thorough background, see \cite{Andrews1976,Andrews1976-1}).

A very natural property is that of estimating the asymptotic behaviour of $A$-restricted partition function, for different sets $A$.
Arguably, the most well-known result of this type is the celebrated Hardy--Ramanujan \cite{HR2} formula (discovered independently by Uspensky \cite{Usp1920}). It states that
\begin{equation}\label{equation - Hardy Ramanujan}
    p(n) \sim \frac{1}{4n\sqrt{3}} \exp(\pi\sqrt{2n/3})\;.
\end{equation}
Following \cite{HR2}, asymptotic estimates for $p_A(n)$ were obtained for various sets $A$. Examples of this arise already in the work of Hardy and Ramanujan \cite{HR1}, where they obtained bounds analogous to \eqref{equation - Hardy Ramanujan} when $A$ is the set of primes, when $A$ is the set of odd integers, and when $A$ is the set of $d^{th}$ powers of positive integers. These restricted partition functions were further studied by many researchers; see e.g.\ \cite{Gaf2016, Gaf2021, YaLWu2021, YaLChen2016, YaLChen2018}.

Many of the proofs of such asymptotic estimations use highly non-trivial arguments, which are lengthy and complex. In contrast to these proofs, using a short and elementary argument Erd\H{o}s \cite{Erdos} proved only a slightly weaker version of (\ref{equation - Hardy Ramanujan}), that is
\begin{equation}\label{eqErdos}
\log p(n) \sim \pi\sqrt{2n/3}\;.
\end{equation}
In this note, we use the ideas from the elegant inductive argument of Erd\H{o}s, in order to prove estimates similar to \eqref{eqErdos} for various restricted partition functions as well as estimates for the number of `plane partitions'.
Although our proofs do not yield the state of the art estimations, the proofs are significantly shorter and yield the correct asymptotic behaviour of the logarithm of the functions under study.
In this note, we will determine the asymptotic behaviour of the logarithm of the number of plane partitions, partitions into $d^{th}$ powers, and partitions into primes.

Despite the clear differences in each of the above-mentioned counting functions, our proofs regarding their asymptotic behaviour follow a similar three-step process:
\begin{enumerate}[label=\textbf{(R\arabic*)}]
    \item\label{R1} Establish a recursive formula for the relevant partition function, akin to the one used by Erd\H{o}s.
    \item\label{R2} Use the recursive formula to show that the partition function in hand is bounded by a sum of the form $\sum_{k=1}^{\infty}\sum_{a\in A} \varphi(a) e^{-\psi(a,k)}$, where $\varphi(x),\psi(x,y)$ are some functions (which differ in each case) and $A$ is some set of integers.
    \item\label{R3} Evaluate these sums --- in an elementary way --- and conclude the proof.
\end{enumerate}
As will become evident below, all proofs are elementary and closely follow the spirit of Erd\H{o}s's proof of (\ref{eqErdos}) as described in \cite{Nathanson}.

\paragraph{Partitions into Powers:} We generalize restricted partition function a bit further to be able to discuss the partition function restricted to `$d^{th}$ powers' for $d>0$. Indeed, for $\sigma\colon \mathbb{N}_+\to \mathbb{N}_+$ a non-decreasing sequence, we denote the `$\sigma$-restricted' partition function of $n$ by $p_{\sigma}(n)$ to be the number of sequences $a_1\leq a_2\leq \ldots \leq a_k$ such that $\sum_{i=1}^{k}\sigma(a_i)=n$.
This is clearly a generalization of $A$-restricted partition functions for infinite $A$, as we can take $\sigma$ to enumerate $A$ in increasing order.
For every positive $d>0$ and a nonnegative integer $n$ we denote $p_d(n)$ to be $p_{\sigma}(n)$, where $\sigma(x)=\lfloor x^{d} \rfloor$.
Note that in the case of integer $d$, the function $p_d(n)$ is $p_A(n)$ where $A=\{x^{d}:x\in \mathbb{N}_+\}$. The asymptotic estimation we prove for $p_d(n)$ is the following:
\begin{theorem}\label{thm:powers}
For every $d>0$ we have the following when $n\to \infty$:
$$
    \log p_{d}(n) \sim c_d \cdot n^{1/(d+1)}\quad\text{where}\quad  c_d=(d+1)\left(\frac{1}{d}\cdot \Gamma\left(1+\frac{1}{d}\right)\cdot \zeta\left(1+\frac{1}{d}\right)\right)^{\frac{d}{d+1}}\;.
$$
\end{theorem}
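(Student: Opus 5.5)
The plan follows Erd\H{o}s's inductive scheme \ref{R1}--\ref{R3}. For \ref{R1}, I weight each partition of $n$ by the total size of its parts. Each partition counted by $p_d(n)$ is a multiset of indices $a$, and part $\sigma(a)=\lfloor a^d\rfloor$ appears with some multiplicity. Summing $n$ over all partitions and grouping by which part-value appears $k$ times gives the identity
\[
n\,p_d(n)=\sum_{k\ge1}\sum_{a\ge1}\sigma(a)\,p_d\bigl(n-k\sigma(a)\bigr),
\]
with $p_d(m)=0$ for $m<0$. (Indices $a$ with $\sigma(a)=0$, i.e.\ $a<1$ when $d$ is small, do not arise since $a\ge1$ gives $\sigma(a)\ge1$.) Write $\lambda=c_d/(d+1)$, so that the target exponent is $f(n)=c_d n^{1/(d+1)}$ with $f'(n)=\lambda n^{-d/(d+1)}$.

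\textbf{Upper bound.} I prove by strong induction that $p_d(n)\le C_\varepsilon e^{(1+\varepsilon)f(n)}$. By concavity, $f(n-m)\le f(n)-m f'(n)$. Plugging this into the recursion gives
\[
n\,p_d(n)\le C_\varepsilon e^{(1+\varepsilon)f(n)}\sum_{k,a}\sigma(a)e^{-k\sigma(a)t},\qquad t=(1+\varepsilon)\lambda n^{-d/(d+1)}.
\]
This is the sum in \ref{R2} with $\varphi=\sigma$ and $\psi(a,k)=k\sigma(a)t$. It remains to show that $S(t)=\sum_k\sum_a\sigma(a)e^{-k\sigma(a)t}\le (1+o(1))\,\Gamma(1+\tfrac1d)\zeta(1+\tfrac1d)\,\tfrac1d\, t^{-1-1/d}$ as $t\to0$.

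\textbf{Step \ref{R3}, evaluating the sum.} For fixed $k$, compare $\sum_a \lfloor a^d\rfloor e^{-k\lfloor a^d\rfloor t}$ with $\int_0^\infty x^d e^{-ktx^d}dx$. Substituting $u=x^d$ gives $\frac1d\Gamma(1+\frac1d)(kt)^{-1-1/d}$. The error from the floor and from discretization is $O((kt)^{-1})$ up to a bounded factor, coming from the monotone pieces of $xe^{-x}$ and the max of $ue^{-ktu}$. Summing over $k$, the main terms give $\zeta(1+\frac1d)$. The error terms sum to $O(t^{-1}\log(1/t))$, which is $o(t^{-1-1/d})$.

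With this estimate in hand, I verify $\lambda^{(d+1)/d}=\frac1d\Gamma\zeta$, which is exactly the choice of $c_d$. Then $S(t)\le(1+o(1))(1+\varepsilon)^{-1-1/d}n$, and this is $\le n$ for large $n$, closing the induction (small $n$ are absorbed into $C_\varepsilon$).

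\textbf{Lower bound.} I prove symmetrically that $p_d(n)\ge c_\varepsilon e^{(1-\varepsilon)f(n)}$. Terms with $m=k\sigma(a)>n$ are lost, so I restrict to $m\le \delta n$, say. There I use $f(n-m)\ge f(n)-mf'(n)-O(m^2 n^{-1-d/(d+1)})$ and absorb the quadratic error by further restricting to $m\le n^{\theta}$ for a suitable $\theta$. Since $t^{-1}\asymp n^{d/(d+1)}$, the mass of $S(t)$ comes from $m\approx t^{-1}$, and the truncated tail is negligible. The same integral comparison gives a lower bound $(1-o(1))\frac1d\Gamma\zeta\, t^{-1-1/d}$, which now exceeds $n$.

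I expect the main obstacle to be this lower-bound bookkeeping: choosing the truncation $m\le n^\theta$ with $d/(d+1)<\theta<(2d+1)/(2d+2)$ so that both the Taylor error and the truncated tail stay controlled. A smaller point is that for noninteger $d$ the floors in $\sigma$ must be handled uniformly in $k$ in the integral comparison; I would bound $|\lfloor a^d\rfloor - a^d|\le1$ and absorb the resulting error into the $O((kt)^{-1})$ term. Finally, taking logarithms and letting $\varepsilon\to0$ gives the theorem.
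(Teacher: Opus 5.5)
Your plan follows the paper: the recursion of Lemma~\ref{lem:recursion}, induction with exponent $(1+\varepsilon)c_dn^{1/(d+1)}$, the tangent-line bound from concavity, and the integral comparison \eqref{eq:exp_sum_powers}. Its main terms sum to $\frac{1}{d}\Gamma(1+\frac{1}{d})\zeta(1+\frac{1}{d})t^{-1-1/d}$, and its $2/(e\,kt)$ corrections sum over $k\le n$ to $O(t^{-1}\log n)=o(n)$. For integer $d$ this is exactly Section~\ref{sec:proofpowers} and is correct. Your lower-bound sketch, which the paper omits for this theorem, is also sound. There the floors are harmless, because $\lfloor a^d\rfloor e^{-s\lfloor a^d\rfloor}\ge(a^d-1)e^{-sa^d}$ involves no exponential loss. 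Your window $d/(d+1)<\theta<(2d+1)/(2d+2)$ also makes the quadratic Taylor error uniformly $o(1)$. That is a little simpler than the separate bound on the second-order term $C$ in the paper's prime lower bound (Section~\ref{sec:proofprimelower}).

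The step that fails as written is your treatment of the floors for non-integer $d$, which is exactly what Section~\ref{sec:proofpowers2} exists to handle. You apply $|\lfloor a^d\rfloor-a^d|\le1$ termwise, either as the shift $\lfloor a^d\rfloor e^{-kt\lfloor a^d\rfloor}\le e^{kt}a^de^{-kta^d}$ or through the mean value theorem on $[a^d-1,a^d]$. Either way this destroys the decay of the smallest parts once $kt\gtrsim1$, and your $kt$ ranges up to $nt\asymp n^{1/(d+1)}$. Already the $a=1$ term gets bounded by a quantity of order $1$ instead of $e^{-kt}$. Over $k\le n$ these terms alone use up the whole budget $n$, so the induction does not close. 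Hence the floor error is not $O((kt)^{-1})$ uniformly in $k$.

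Even for small $kt$, the order you state is wrong when $d<1$. For $d=\frac{1}{2}$ one has exactly $\sum_{a\ge1}\lfloor\sqrt{a}\rfloor e^{-s\lfloor\sqrt{a}\rfloor}=\sum_{j\ge1}(2j+1)je^{-sj}=4s^{-3}+s^{-2}+O(1)$. That is an error of order $s^{-1/d}$, not $s^{-1}$. This is harmless, since $\sum_k(kt)^{-1/d}=O(t^{-1/d})=o(n)$, but your claimed total $O(t^{-1}\log(1/t))$ is false.

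The missing idea is the paper's split of the $k$-range. For $k\le\log^2n$ one has $e^{kt}=1+o(1)$, so the crude shift costs only a factor $1+o(1)$. For $k\ge\log^2n$, treat $a=1$ exactly: it contributes the geometric series $\sum_ke^{-kt}\le(1-e^{-t})^{-1}=O(n^{d/(d+1)})$. For $a\ge2$, use $a^d-1\ge\eta a^d$ with $\eta=1-2^{-d}$, so that \eqref{eq:exp_sum_powers} applies with $\delta=\eta kt$. The resulting tail $\sum_{k\ge\log^2n}C\,n\,k^{-1-1/d}$, plus $O(t^{-1}\log n)$ from the maxima, is then $o(n)$. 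With this patch your upper bound goes through.
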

In the above theorem, as in the rest of the paper, $\Gamma(z)$ is the {\em Gamma function} given by $\Gamma(z) = \int_0^\infty t^{z-1} e^{-t} \,dt$ and $\zeta(s)$ is the {\em Euler–Riemann zeta function} given by $\zeta(s) = \sum_{n=1}^{\infty} \frac{1}{n^s}$.

We should point out that a more precise asymptotic formula is known for $p_d(n)$. The case of integers $d$ is due to \cite{Gaf2016, HR1}, and the case of non-integer $d$ is more recent and is due to \cite{LucRal2016, YaLWu2021, YaLChen2016, YaLChen2018}. However, these estimates rely
on far longer and highly technical proofs.

The proof of Theorem \ref{thm:powers} is simpler when assuming $d$ is an integer. Therefore, we first prove the theorem for an integer $d$, and then modify the proof to work also for a non-integer $d$. In both cases (integer $d$ or not), we will only prove the upper bound in Theorem \ref{thm:powers}, as the proof of the lower bound is very similar.

\paragraph{Partitions into Primes:} The main result of this paper is an asymptotic estimation of $\log p_{\mathcal{P}}(n)$ where $\mathcal{P}$ is the set of prime numbers. For notional convenience we write $pp(n)$ instead of $p_{\mathcal{P}}(n)$. Then the asymptotic formula of $\log pp(n)$ is given by the following theorem.
\begin{theorem}\label{thm:primes}
We have the following when $n\to \infty$:
$$
        \log pp(n) \sim 2\pi\sqrt{\frac{n}{3\log n}}\;.
$$
\end{theorem}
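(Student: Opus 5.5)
We follow the recipe \ref{R1}--\ref{R3}, in the spirit of Erd\H{o}s's argument. For \ref{R1}, we use the identity obtained by counting, over all partitions of $n$ into primes, the total sum of the parts:
$$
n\cdot pp(n)=\sum_{q\in\mathcal{P}}\sum_{k\ge 1} q\cdot pp(n-kq),
$$
with the convention $pp(m)=0$ for $m<0$. This holds because the number of partitions of $n$ in which the prime $q$ occurs at least $k$ times is exactly $pp(n-kq)$.

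Write $f(n)=2\pi\sqrt{n/(3\log n)}$. For the upper bound we prove by induction that $pp(n)\le C e^{(1+\varepsilon)f(n)}$ for every fixed $\varepsilon>0$. Since $f$ is concave and increasing, we can use $f(n-m)\le f(n)-m f'(n)$ with $f'(n)\approx f(n)/(2n)=\pi/\sqrt{3n\log n}$. Plugging the induction hypothesis into the recursion reduces \ref{R2} to showing
$$
\sum_{k\ge1}\sum_{q\in\mathcal{P}} q\, e^{-(1+\varepsilon)kq\,t}\le n, \qquad t=f'(n).
$$
For \ref{R3}, we evaluate the inner sum over primes by partial summation against $\pi(x)$. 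Since $t\to0$, the sum is dominated by primes of size about $1/t$, so only Chebyshev/PNT-level information at scale $x\approx\sqrt{n\log n}$ is needed:
$$
\sum_q q e^{-sq}\approx \int_2^\infty \frac{x}{\log x}e^{-sx}\,dx\sim \frac{1}{s^2\log(1/s)}.
$$
Summing over $k$ with $s=k(1+\varepsilon)t$ gives $\frac{\zeta(2)}{(1+\varepsilon)^2t^2\log(1/t)}$. The terms with large $k$ are harmless because we can bound $\log(1/(kt))$ crudely there, or truncate at $k\le 1/\sqrt t$ and treat the tail separately. With $t^2=\pi^2/(3n\log n)$ and $\log(1/t)\sim\frac12\log n$, this equals
$$
\frac{\pi^2/6}{(1+\varepsilon)^2}\cdot\frac{3n\log n}{\pi^2}\cdot\frac{2}{\log n}=\frac{n}{(1+\varepsilon)^2}<n.
$$
So the induction closes for large $n$, and this computation shows why the constant in $f$ is forced.

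The lower bound is symmetric: we prove $pp(n)\ge c\,e^{(1-\varepsilon)f(n)}$ by induction, using $f(n-m)\ge f(n)-m f'(n-m)$ or a similar convexity estimate, and showing that the corresponding sum is at least $n$. One extra difficulty appears here. $pp(n)$ vanishes for $n=1$ and only becomes positive from $n=2$ on, so the base case must be chosen with care. Also, the terms with $kq$ comparable to $n$ break the linearization, so we restrict the sum to $kq\le n^{1-\delta}$ and check that the restricted sum still has the same asymptotics.

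We expect \ref{R3} to be the main obstacle: bounding $\sum_q qe^{-sq}$ uniformly in $s$ and getting the factor $\log(1/s)$ with asymptotic constant exactly $1$, using the prime number theorem (we allow ourselves $\pi(x)\sim x/\log x$; Chebyshev alone would lose constants). We will split the range of $x$ into $x\le (1/s)^{1-\delta}$, $(1/s)^{1-\delta}<x<(1/s)^{1+\delta}$, and larger $x$. On the middle range $\log x=(1\pm\delta)\log(1/s)$. The outer ranges contribute $o(1/(s^2\log(1/s)))$, because the first is bounded by $(1/s)^{2-2\delta}$ and the last is exponentially small. The error terms of the linearization $f(n-m)$ versus $f(n)-mf'(n)$ are handled the same way, since the relevant $m=kq$ are $O(\sqrt{n}\,\mathrm{polylog}\,n)=o(n)$.
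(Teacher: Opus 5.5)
Your proposal is correct and follows the paper's proof: the same recursion, the same Erd\H{o}s-style induction with the tangent-line bound $f(n-m)\le f(n)-mf'(n)$, the same reduction to $\sum_k\sum_p p\,e^{-skp}$ controlled by the PNT-based asymptotic $\sum_p pe^{-\delta p}\sim \delta^{-2}/\log(1/\delta)$, and the same split into small and large $k$. The differences are local: you derive the prime-sum asymptotic by partial summation against $\pi(x)$ rather than via $p_m\sim m\log m$ and the substitution $z=\eta x\log x$, and in the lower bound your estimate $f(n-m)\ge f(n)-mf'(n-m)$, with $f'(n-m)=(1+o(1))f'(n)$ for $m\le n^{1-\delta}$, is simpler than the paper's second-order Taylor bound and extra term $C$ (note that $\sqrt{x/\log x}$ is concave only for $x>e^{\sqrt 3}$, which is why the paper uses $\log(10x)$, though your tangent-line bound still holds at the few small integers once $n$ is large).
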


This theorem was first proven by Hardy-Ramanujan \cite{HR1}, and a more precise result was shown in \cite{Vau2008}.
The proof of Theorem \ref{thm:primes} is the longest among the proofs presented in this note, while remaining elementary. The proofs of the upper and lower bounds in Theorem \ref{thm:primes} are, as before, very similar. However, in contrast to the case of Theorem \ref{thm:powers}, we provide a fully detailed proof for both bounds to illustrate how one can use our recipe to produce lower bounds.

\paragraph{Plane Partitions:}
A very well-studied generalization of the partition function are the \emph{high dimensional partitions}. One special case of interest is $\PL(n)$, the two-dimensional partition function, also known as the plane partition function.
It is defined as the number of arrays of nonnegative integers $(\pi_{i,j})$ such that $\sum_{i,j}\pi_{i,j}=n$ and such that the rows and columns are non-increasing; that is, for all $k$, the sequences $\pi_{i,k}$ and $\pi_{k,i}$ are non-increasing in $i$.

For more general partition functions, such as the plane partition function, one can still apply our three-step recipe, while some of the steps may no longer be elementary. We exemplify this with the plane partition function $\PL(n)$, and show the following:
\begin{theorem}\label{thm:plane_partitions}
We have the following when $n\to \infty$:
$$
\log \PL(n) \sim \left(27\cdot \zeta(3)/4\right)^{1/3} \cdot n^{2/3}\;.
$$
\end{theorem}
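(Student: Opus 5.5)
We follow the three-step recipe \ref{R1}--\ref{R3} used for Theorem~\ref{thm:powers}, with the weight $\lambda=k$ attached to the part $k$ reflecting that plane partitions have generating function $\prod_{k\ge1}(1-x^k)^{-k}$ (MacMahon). We take this product formula as a known input; it is the one non-elementary ingredient we allow ourselves.

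\textbf{Step \ref{R1}: recursion.} Differentiating $\log\prod_k(1-x^k)^{-k}$ gives
$$
n\,\PL(n)=\sum_{k\ge1}\sum_{j\ge1} k^2\,\PL(n-jk)=\sum_{m=1}^{n}\sigma_2(m)\,\PL(n-m),
$$
where $\sigma_2(m)=\sum_{k\mid m}k^2$. This is the analogue of Erd\H{o}s's identity $np(n)=\sum\sigma(m)p(n-m)$.

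\textbf{Steps \ref{R2} and \ref{R3}: upper bound.} Set $c=(27\zeta(3)/4)^{1/3}$ and let $c'>c$. We prove by induction that $\PL(n)\le C e^{c' n^{2/3}}$. By concavity of $t\mapsto t^{2/3}$,
$$
(n-jk)^{2/3}\le n^{2/3}-\tfrac23 jk\,n^{-1/3}.
$$
Substituting into the recursion and writing $t=\tfrac23 c'n^{-1/3}$, it suffices to show that
$$
\sum_{k\ge1}\sum_{j\ge1}k^2e^{-tjk}\le n
$$
for large $n$. This double sum is of the form in \ref{R2}, with $\varphi(k)=k^2$ and $\psi(k,j)=tjk$. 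To evaluate it in \ref{R3}, sum over $k$ first for fixed $j$. The function $x^2e^{-tjx}$ is unimodal, so comparison with the integral gives
$$
\sum_k k^2e^{-tjk}\le \frac{2}{(tj)^3}+O\!\left(\frac{1}{(tj)^2}\right).
$$
Summing over $j$, the double sum is at most $2\zeta(3)t^{-3}+O(t^{-2})$. Now
$$
2\zeta(3)t^{-3}=2\zeta(3)\cdot\frac{27}{8c'^3}\,n=\frac{27\zeta(3)}{4c'^3}\,n<n,
$$
since $c'^3>27\zeta(3)/4$. The $O(t^{-2})=O(n^{2/3})$ error is negligible. Small $n$ are absorbed into the constant $C$, and letting $c'\downarrow c$ gives the upper bound.

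\textbf{Lower bound.} We run the same induction for $\PL(n)\ge C^{-1}n^{-A}e^{c''n^{2/3}}$ with $c''<c$. The polynomial factor is there to absorb the concavity error. Since
$$
(n-m)^{2/3}\ge n^{2/3}-\tfrac23 m n^{-1/3}-O\!\left(m^2n^{-4/3}\right),
$$
we restrict the double sum to $jk\le n^{1/3+\epsilon}$, where this error is $o(1)$. The discarded tail has exponentially small weight because $e^{-tjk}\le e^{-\Omega(n^{\epsilon})}$. On the retained range the integral comparison gives the matching lower bound $2\zeta(3)t^{-3}-O(t^{-2})$, which exceeds $n$ by a factor $c^3/c''^3>1$. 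This surplus comfortably absorbs the polynomial factors coming from $(n-m)^{-A}$ versus $n^{-A}$.

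\textbf{Main obstacle.} We expect the main difficulty to be bookkeeping the error terms uniformly in $j$ in the lower bound: the range $tj\gtrsim1$, where the integral approximation is poor, must be handled separately. It contributes only $O(t^{-1})$ in total, which is negligible against $n\asymp t^{-3}$.
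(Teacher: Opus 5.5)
Your proof is correct, and your upper bound is essentially the paper's proof. It uses the same MacMahon recursion; the paper quotes it as Lemma \ref{lem:recursion_plane_partitions}, while you derive it from the product formula, which is the derivation the paper alludes to. It also uses the same concavity step $(n-x)^{2/3}\le n^{2/3}-\frac{2x}{3n^{1/3}}$ and the same order of summation (inner sum over the part $k$ with weight $k^2$).

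The only real difference is in \ref{R3}:
\begin{itemize}
\item The paper evaluates $\sum_{m\ge1}m^2e^{-\delta m}=\frac{e^{-\delta}(e^{-\delta}+1)}{(1-e^{-\delta})^3}$ exactly and uses the sharp bound $\le 2/\delta^3$ from Lemma \ref{lem:estimations}. The double sum is then at most $\frac{27\zeta(3)}{4c^3}n=n$ on the nose, giving $\PL(n)\le e^{cn^{2/3}}$ for every $n$ with no slack and no constant.
\item Your unimodal integral comparison leaves an $O(t^{-2})=O(n^{2/3})$ error. This forces $c'>c$ and a constant $C$, which is harmless for the asymptotic. In exchange you need no closed form, as in Section \ref{sec:proofpowers}.
\end{itemize}

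The paper never proves the lower bound for $\PL(n)$; it only asserts it is similar. Your sketch is therefore an addition, and it is sound. Truncating at $jk\le n^{1/3+\epsilon}$ makes the second-order Taylor loss only $e^{-o(1)}$, and the discarded tail is $o(1)$. This avoids separately bounding the second-order term, which the paper needs for primes in Section \ref{sec:proofprimelower} (the sum $C$ there).

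Two small corrections to that sketch:
\begin{itemize}
\item The factor $n^{-A}$ is unnecessary. Since $(n-m)^{-A}\ge n^{-A}$, replacing one by the other already goes in the right direction. The $e^{-o(1)}$ loss is absorbed by the surplus factor $(c/c'')^3>1$, not by the polynomial.
\item Your stated ``main obstacle'' is not one. Unimodality gives $\sum_{k\ge1}k^2e^{-sk}\ge 2/s^3-4e^{-2}/s^2$ for every $s>0$, including when the right side is negative. Summing over all $j\ge1$ with $s=tj$ then yields $2\zeta(3)t^{-3}-O(t^{-2})$ directly, with no separate treatment of the range $tj\gtrsim 1$.
\end{itemize}
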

We remark that our proof uses Lemma \ref{lem:recursion_plane_partitions}, which establishes \ref{R1} for $\PL(n)$. The lemma provides the required recursion for $\PL(n)$ and is due to MacMahon \cite{Mac1,Mac2,Mac3} (see also \cite{HeiNeuTro2023}). Unfortunately, the proof of the lemma is not as simple as the proofs of the recursion that we use for the restricted partition functions; in particular, it uses the generating function of $\PL(n)$, which is derived in a non-elementary way. For this reason, we do not provide a proof for this lemma, and only show how to implement \ref{R2} and \ref{R3} in an elementary way given this lemma to obtain the upper bound in Theorem \ref{thm:plane_partitions}.

We also wish to highlight that a more precise asymptotic formula is known \cite{Wri1931}, although the proof of this formula uses the generating function of the plane partition as well as analytic techniques; for more on this, see \cite{Sta1971}. The current state of the art regarding the asymptotic growth of higher dimensional partitions is only of the order of magnitude of the logarithm of the number of high dimensional partitions \cite{Solids}.
The reason for the discrepancy between the one and two dimensional partitions and the higher ones comes from the lack of a `nice' formula for its generating function. For a more thorough discussion on this topic, see \cite{Andrews1976}.

\paragraph{Organization of the paper:} The first two sections are `warmup' sections where only upper bounds are proven due to the conceptual similarity of the lower bound cases. In Section \ref{sec:planepartition}, we prove the upper bound of Theorem \ref{thm:plane_partitions}, which illustrates how to implement \ref{R2} and \ref{R3}. Then, in Section \ref{sec:proofpowers}, we prove the upper bound of Theorem \ref{thm:powers} for integer $d$, which will require implementing \ref{R1}. Section \ref{sec:proofpowers2} builds upon Section \ref{sec:proofpowers} and completes the proof of the upper bound of Theorem \ref{thm:powers} for all $d>0$. Finally, in Section \ref{sec:mainproof} and Section \ref{sec:proofprimelower}, we prove the upper bound and the lower bound of Theorem \ref{thm:primes}, respectively. The appendix is dedicated to proving some elementary inequalities.

\section{The Number of Plane Partitions}\label{sec:planepartition}

As mentioned in the introduction, step \ref{R1} requires establishing a recursion formula. In the case of plane partitions, we use the following lemma by MacMahon \cite{Mac1,Mac2,Mac3}:
\begin{lemma}\label{lem:recursion_plane_partitions}
    For every positive integer $n$ we have
    \[
         n\cdot \PL(n)= \sum_{t=1}^{n} \sum_{k=1}^{n/t}t^2 \cdot \PL(n-tk)\;.
    \]
\end{lemma}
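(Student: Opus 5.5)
The plan is to derive the recursion from MacMahon's generating function
\[
F(q)=\sum_{n\ge 0}\PL(n)q^n=\prod_{m\ge 1}(1-q^m)^{-m},
\]
which we take as known (this is the non-elementary input mentioned in the introduction). The derivation is then a logarithmic-derivative computation.

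First, take logarithms: $\log F(q)=-\sum_{m\ge1} m\log(1-q^m)$. Differentiating and multiplying by $q$ gives
\[
q\frac{F'(q)}{F(q)}=\sum_{m\ge1} m\cdot\frac{m q^m}{1-q^m}=\sum_{m\ge1}\sum_{k\ge1} m^2 q^{mk}.
\]
Hence $qF'(q)=F(q)\cdot\sum_{t\ge1}\sum_{k\ge1}t^2q^{tk}$. All manipulations are valid as formal power series, or analytically for $|q|<1$.

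Second, compare the coefficients of $q^n$ on both sides. On the left, $qF'(q)=\sum_n n\PL(n)q^n$, so the coefficient is $n\PL(n)$. On the right, the coefficient of $q^n$ in the product is $\sum_{t,k\ge1,\ tk\le n} t^2\,\PL(n-tk)$. The condition $tk\le n$ forces $1\le t\le n$ and $1\le k\le n/t$, which is exactly the double sum in Lemma~\ref{lem:recursion_plane_partitions}.

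The only genuine obstacle is the product formula for $F(q)$ itself. Proving it requires a bijective or analytic argument, for example via MacMahon's box formula $\prod_{i,j,k}\frac{i+j+k-1}{i+j+k-2}$ with the box dimensions sent to infinity, or via Bender–Knuth / RSK-type bijections. Following the paper, we cite this result rather than reprove it. Once it is granted, the recursion follows from the routine coefficient comparison above.
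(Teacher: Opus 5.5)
Your derivation is correct and follows the route the paper has in mind: the paper gives no proof, only attributing the recursion to MacMahon and remarking that it rests on the generating function $\prod_{m\ge1}(1-q^m)^{-m}$, and your logarithmic-derivative computation followed by comparing coefficients of $q^n$ is the standard way to extract the recursion from it. One minor aside: the box formula you quote, $\prod_{i,j,k}\frac{i+j+k-1}{i+j+k-2}$, only counts plane partitions in a box, so to recover the generating function in the limit you need its $q$-analogue $\prod_{i,j,k}\frac{1-q^{i+j+k-1}}{1-q^{i+j+k-2}}$.
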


We continue by implementing the rest of the recipe and proving Theorem \ref{thm:plane_partitions}.

\begin{proof}[Proof of Theorem \ref{thm:plane_partitions}]
  First, we note that for every $\delta>0$
\begin{equation}\label{eq:exp_sum_powers_0}
    \sum_{m=1}^{\infty} m^2 e^{-\delta m} = \frac{e^{-\delta}(e^{-\delta}+1)}{(1-e^{-\delta})^3}\leq \frac{2}{\delta^3}\;,
\end{equation}
where the equality holds as for all $0<t<1$ we have\footnote{This follows by differentiating the identity $\sum_{k=0}^{\infty}t^{k}=\frac{1}{1-t}$ twice.}
$\sum_{m=1}^{\infty}m^2t^m=\frac{t(t+1)}{(1-t)^3}$, and the inequality holds as for all $x>0$ we have\footnote{See Lemma \ref{lem:estimations} in the appendix for a short proof.} $\frac{e^{-x}(e^{-x}+1)}{(1-e^{-x})^3}\leq \frac{2}{x^3}$.

Now letting $c= (27\zeta(3)/4)^{1/3}$ we prove that for all $n$ we have $\PL(n)\leq\exp(c\cdot n^{2/3})$, which yields the upper bound in Theorem \ref{thm:plane_partitions}. We do so by induction on $n$, with the base case trivially holding. Observe the following:
\begin{align*}
    n\cdot \PL(n) &= \sum _{t=1}^{n} \sum_{k=1}^{n/t} t^2 \cdot \PL(n-tk)\leq \sum _{t=1}^{n} \sum_{k=1}^{n/t} t^2\cdot e^{c(n-tk)^{2/3}} \leq e^{cn^{2/3}}\cdot \sum_{t=1}^{n} \sum _{k=1}^ {n/t} t^2 \cdot e^{-\frac{2ctk}{3n^{1/3}}}\;,
\end{align*}
where the first equality is Lemma \ref{lem:recursion_plane_partitions}, the first inequality is induction, and the second inequality uses the elementary fact
$(n-x)^{2/3}\leq n^{2/3}-\frac{2x}{3n^{1/3}}$.

To finish the proof, we need to show that the double sum above is at most $n$.
This follows by letting $\delta(k)=\frac{2ck}{3n^{1/3}}$ in \eqref{eq:exp_sum_powers_0}, as then we obtain
\begin{equation*}
\pushQED{\qed}
    \sum_{t=1}^{n}\sum_{k=1}^{n/t} t^2 e^{-\delta(k) t} \leq \sum_{k=1}^{\infty}\frac{2}{\delta(k)^3}=\frac{27n}{4c^3}\sum_{k=1}^{\infty}\frac{1}{k^{3}}=n\;.\qedhere
\end{equation*}
\end{proof}

We emphasize that in the above proof, \eqref{eq:exp_sum_powers_0} can be seen as step \ref{R3}, while \ref{R2} is implemented with $A$ being the set of all integers and both $\varphi(x)=x^2$ and $\psi(x,y)=x\delta(y)$.

\section{Partitions into integer Powers}\label{sec:proofpowers}

We begin with the proof of step \ref{R1}, which is used in the proofs of Theorem \ref{thm:powers} and Theorem \ref{thm:primes} as the first step of our recipe.
The following recursion is well known, and was also used implicitly in Erd\H{o}s' proof \cite{Erdos}.

\begin{lemma}\label{lem:recursion}
For every $\sigma\colon \mathbb{N}_+\to \mathbb{N}_+$, a non-decreasing sequence, and positive integer $n$ we have
\begin{equation*}
    n\cdot p_{\sigma}(n)= \sum_{s\in \mathbb{N}} \sigma(s)\sum _{k=1}^{n/\sigma(s)} p_{\sigma}(n-\sigma(s)k)\;.
\end{equation*}
\end{lemma}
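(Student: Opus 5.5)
We prove the identity by double counting: both sides count the same weighted collection of partitions. Let $\mathcal{S}(n)$ be the set of sequences $a_1\le a_2\le\dots\le a_k$ of positive integers with $\sum_i \sigma(a_i)=n$, so that $p_\sigma(n)=|\mathcal{S}(n)|$. The left-hand side can be rewritten as
\[
n\cdot p_\sigma(n)=\sum_{\lambda\in\mathcal{S}(n)}\sum_{i=1}^{k}\sigma(a_i)=\sum_{\lambda\in\mathcal{S}(n)}\sum_{s\in\mathbb{N}_+}\sigma(s)\,m_s(\lambda),
\]
where $m_s(\lambda)$ is the number of indices $i$ with $a_i=s$. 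This holds because each part $a_i$ contributes $\sigma(a_i)$ to the total $n$.

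Next we write the multiplicity as a sum of indicators, $m_s(\lambda)=\sum_{k\ge 1}\mathbbm{1}[m_s(\lambda)\ge k]$. Exchanging the order of summation (all terms are nonnegative and only finitely many are nonzero) gives
\[
n\cdot p_\sigma(n)=\sum_{s}\sigma(s)\sum_{k\ge1}\#\{\lambda\in\mathcal{S}(n): m_s(\lambda)\ge k\}.
\]
The key step is a bijection between $\{\lambda\in\mathcal{S}(n): m_s(\lambda)\ge k\}$ and $\mathcal{S}(n-k\sigma(s))$. In one direction, we delete $k$ copies of the part $s$. In the other, we insert $k$ copies of $s$ and re-sort the sequence. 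These maps are clearly mutually inverse. Since $\sigma(s)\ge1$, the set is empty unless $k\sigma(s)\le n$, so the inner sum runs over $1\le k\le n/\sigma(s)$. This yields exactly the right-hand side.

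The only point needing care is the boundary case $k\sigma(s)=n$. Here the convention $p_\sigma(0)=1$ (the empty sequence) must be used, and it matches the single partition consisting of $k$ copies of $s$. Everything else is routine bookkeeping, so no step is a genuine obstacle. The non-decreasing hypothesis on $\sigma$ is not even needed, since partitions are multisets of indices $s$.
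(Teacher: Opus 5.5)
Your proof is correct and follows the paper's argument. The paper likewise writes $n\cdot p_\sigma(n)=\sum_s \sigma(s)\sum_t t\, p_\sigma(n,s,t)$, where $p_\sigma(n,s,t)$ counts partitions in which $s$ appears exactly $t$ times, rewrites this as a sum over $t$ of the number of partitions in which $s$ appears at least $t$ times, and identifies that count with $p_\sigma(n-\sigma(s)t)$; you simply make explicit the deletion/insertion bijection and the convention $p_\sigma(0)=1$ that the paper leaves implicit in its last equality.
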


\begin{proof}
To see this let $p_{\sigma}(n,s,t)$ and $p_{\sigma}'(n,s,t)$ to be the number of sequences $a_1\leq a_2\leq \ldots \leq a_k$ such that $\sum_{i=1}^{k}\sigma(a_i)=n$ where $s$ appears exactly $t$ times in $\{a_1,a_2,\ldots ,a_k\}$ and at least $t$ times in $\{a_1,a_2,\ldots ,a_k\}$ respectively.
Then by double counting we have
\begin{align*}
    n\cdot p_\sigma(n)&=\sum_{s\in \mathbb{N}, t\in \mathbb {N}} \sigma(s)\cdot t \cdot p_\sigma(n,s,t)=\sum _{s\in \mathbb{N}}\sigma(s)\sum_{t\in \mathbb {N}}t\cdot p_\sigma(n,s,t)\\
    &=\sum _{s\in \mathbb{N}}\sigma(s)\sum_{t\in \mathbb {N}}p_\sigma'(n,s,t)=\sum _{s\in \mathbb{N}}\sigma(s)\sum _{k=1}^{n/\sigma(s)}p_\sigma(n-\sigma(s)k)\;.\qedhere
\end{align*}
\end{proof}

We now prove the upper bound in Theorem \ref{thm:powers} for any \emph{integer} $d>0$.
We begin by proving inequality \eqref{eq:exp_sum_powers} which is an analogue to inequality (\ref{eq:exp_sum_powers_0}) which we used in the previous section. Let $f_\delta(x)=x^{d}e^{-\delta x^{d}}$ and note that for fixed $\delta$ we have $f'_\delta(x)=d(1-\delta x^d)x^{d-1}e^{-\delta x^d}$. Hence, $a\coloneqq \frac{1}{\delta^{1/d}}$ is the maximal point of $f_\delta(x)$. In particular, the function $f_\delta(x)$ is increasing for all $0\leq x\leq a$, and decreasing otherwise.
Therefore,
\begin{equation}\label{eq:floors_powers}
     \sum_{m=1}^{\lfloor a\rfloor-1} f_\delta(m) \leq \int_{0}^{\lfloor a\rfloor}f_\delta(x) dx \quad\text{and}\quad \sum_{m=\lfloor a \rfloor+2}^{\infty} f_\delta(m) \leq \int_{\lfloor a\rfloor}^{\infty}f_\delta(x) dx\;.
\end{equation}
We see that the integral $\int_{0}^{\infty}f_\delta(x)dx$ accounts for $\sum_{m=1}^{\infty}f_\delta(m)$ except possibly for $f(\lfloor a\rfloor)$ and $f(\lfloor a\rfloor +1)$, hence,
\begin{equation}\label{eq:exp_sum_powers}
    \sum_{m=1}^{\infty}f_\delta(m)\leq \int_{0}^{\infty}f_\delta(x)dx+2f(a)=\frac{1}{d\delta^{1+{1}/{d}}}\int_{0}^{\infty}z^{1/d}e^{-z} dz+\frac{2}{e\delta}=\frac{\Gamma\left(1+{1}/{d}\right)}{d\delta^{1+1/d}}+\frac{2}{e\delta}\;,
\end{equation}
where the inequality holds by \eqref{eq:floors_powers} and as $a$ is the maximal point of $f_\delta(x)$, and the first equality holds by the substitution $z=\delta x^d$ which implies that $\frac{z^{1/d-1}}{d\delta^{1/d}}dz=dx$ and by noting that $x=\left(\frac{z}{\delta}\right)^{1/d}$.

Now using \eqref{eq:exp_sum_powers}, for every $\varepsilon>0$, we prove by induction on $n$ that $p_d(n) \leq K e^{(1+\varepsilon)c_dn^{1/(d+1)}}$, where $K=K(\varepsilon)$. Note that by choosing $K$ large enough, we can assume that the induction assumption holds for all $n \leq n_0(\varepsilon,d)$, allowing us to assume in what follows that $n \geq n_0(\varepsilon,d)$.
By Lemma \ref{lem:recursion}, the induction hypothesis, and by setting $\delta(k)\coloneqq \frac{(1+\varepsilon)c_d k}{(d+1)n^{d/(d+1)}}$ we have:
\begin{align}\label{eq:1.0}
    n\cdot p_{d}(n)&= \sum_{m=1}^{n^{1/d}} m^{d}\sum _{k=1}^{n/m^d} p_{d}(n-m^dk) \leq \sum_{m=1}^{n^{1/d}}m^d \sum _{k=1}^{n/m^{d}} K e^{(1+\varepsilon)c_d(n-m^dk)^{1/(d+1)}}\nonumber\\
    &\leq Ke^{(1+\varepsilon)c_d n^{1/(d+1)}}\sum _{k=1}^{n} \sum_{m=1}^{\infty}f_{\delta(k)}(m)\;,
\end{align}
where in the second inequality we used the basic fact $(n-x)^{1/(d+1)} \leq n^{1/(d+1)}-\frac{x}{(d+1)n^{d/(d+1)}}$.

To finish the proof, we now prove that the double sum in \eqref{eq:1.0} is bounded from above by $n$. Indeed, letting $\gamma=1-\frac{1}{(1+\varepsilon)^{1+1/d}}>0$, we have the following for large enough $n$:
\begin{align*}
    \sum _{k=1}^{n} \sum_{m=1}^{\infty} f_{\delta(k)}(m) &\leq \sum _{k\leq n} \frac{\Gamma\left(1+{1}/{d}\right)}{d\delta(k)^{1+1/d}}+\frac{2}{e\delta(k)}\\
    &\leq \frac{n}{(1+\varepsilon)^{(d+1)/d}{\zeta\left(1+{1}/{d}\right)}}\cdot \sum_{k=1}^{\infty}\frac{1}{k^{1+1/d}}+\frac{(d+1)n^{d/(d+1)}}{(1+\varepsilon)c_d}\cdot \sum_{k=1}^{n}\frac{1}{k}\\
     &\leq \frac{n}{(1+\varepsilon)^{1+1/d}} +\frac{2(d+1)}{(1+\varepsilon) c_d}\cdot n^{d/(d+1)}\log(n)\leq (1-\gamma)n+\gamma n=n\;,
\end{align*}
where the first inequality holds by inequality \eqref{eq:exp_sum_powers}, the second inequality holds as $\frac{1}{\delta(k)^{1+1/d}}=\frac{n}{(k(1+\varepsilon))^{(d+1)/d}}\cdot \frac{d}{\Gamma\left(1+{1}/{d}\right)\zeta\left(1+{1}/{d}\right)}$, and the last inequality holds by the choice of $\gamma$ and provided $n$ is large enough. \qed


\section{The general case: Partitions into positive Powers}\label{sec:proofpowers2}
In this section we prove the upper bound in Theorem \ref{thm:powers} in full generality.
The proof starts similarly to the case of integer $d$, and we will make use of \eqref{eq:exp_sum_powers}, which was proved in the previous section.

Indeed, for every $\varepsilon>0$, we prove by induction on $n$ that $p_d(n) \leq K e^{(1+\varepsilon)c_dn^{1/(d+1)}}$, where $K=K(\varepsilon)$. Note that by choosing $K$ large enough, we can assume that the induction assumption holds for all $n \leq n_0(\varepsilon,d)$, allowing us to assume in what follows that $n \geq n_0(\varepsilon,d)$.
The following holds due to Lemma \ref{lem:recursion}, the induction hypothesis, and by setting $\delta(k)\coloneqq \frac{(1+\varepsilon)c_d k}{(d+1)n^{d/(d+1)}}$:
\begin{align*}
    n\cdot p_{d}(n)&= \sum_{m=1}^{(n+1)^{1/d}-1} \lfloor m^{d}\rfloor\sum _{k=1}^{n/\lfloor m^d\rfloor } p_{d}(n-\lfloor m^d\rfloor k) \leq \sum_{m=1}^{(n+1)^{1/d}-1}m^d \sum _{k=1}^{n/\lfloor m^{d}\rfloor} K e^{(1+\varepsilon)c_d(n-\lfloor m^d\rfloor k)^{1/(d+1)}}\nonumber\\
    &\leq Ke^{(1+\varepsilon)c_d n^{1/(d+1)}}\sum _{k=1}^{n} \sum_{m=1}^{\infty}m^d e^{-\delta(k)\lfloor m^{d}\rfloor}\;,
\end{align*}
where in the second inequality we used the basic fact $(n-x)^{1/(d+1)} \leq n^{1/(d+1)}-\frac{x}{(d+1)n^{d/(d+1)}}$.

To finish the proof, we need to show that the double sum above is at most $n$.
Clearly, the double sum above is at most $A+B$ where
\[
    A=  \sum _{k=1}^{\log^{2}(n)} \sum_{m=1}^{\infty} m^{d}e^{-\delta(k) (m^{d}-1)} \quad\text{and}\quad B=\sum_{k=\log^{2}(n)}^{n}\sum_{m=2}^{\infty} m^{d} e^{-\delta(k) (m^{d}-1)}+\sum_{k=1}^{n}e^{-\delta(k)}\;.
\]
\paragraph{Bounding $A$:}
Let $\gamma>0$ be such that $\left(\frac{1+\gamma \varepsilon}{1+\varepsilon}\right)^{1+1/d}\leq (1-\varepsilon)$. By inequality \eqref{eq:exp_sum_powers} there is a $\delta_0=\delta_0(\gamma, \varepsilon,d)$ so that $ \sum_{m=1}^{\infty} m^{d} e^{-\delta m^{d}} \leq (1+\gamma \varepsilon)d^{-1}\Gamma(1+1/d)/\delta^{1+1/d}$ holds for every $\delta < \delta_0$. Assume $n$ is large enough so that $\delta(k)< \delta_0$ for all $k\leq \log^2(n)$. Then, \begin{align*}\label{eq:after_integral}
     A&\leq  e^{\delta(\log^2(n))}\sum_{k=1}^{\infty} \frac{(1+\gamma \varepsilon)\Gamma(1+1/d)}{d\delta(k)^{1+1/d}}\leq  \frac{(1+\gamma \varepsilon)^{1+1/d}n}{(1+\varepsilon)^{1+1/d}\zeta(1+1/d)}\sum_{k=1}^{\infty}\frac{1}{k^{1+1/d}}\leq  \left(1-\varepsilon\right)n\;,
\end{align*}
The first inequality holds by \eqref{eq:exp_sum_powers} with $\delta=\delta(k)~~(<\delta_0)$, and the second inequality holds by the definition of $\delta(k),c_d$ and as $e^{\delta(\log^2(n))}\leq (1+\gamma \varepsilon)^{1/d}$ provided $n$ is large enough.

\paragraph{Bounding $B$:}
Since $A \leq (1-\varepsilon)n$ it remains to prove that $B \leq \varepsilon n$. Indeed, let $\eta=\eta(d)$ be such that for all $m\geq 2$ we have $m^{d}-1\geq \eta m^{d}$. Then,
\begin{align*}
    B\leq \sum_{k=\log^{2}(n)}^{n}\sum_{m=2}^{\infty} m^{d} e^{-\eta \delta(k) m^{d}} + \frac{1}{1-e^{-\delta(1)}}
    \leq \sum_{k=\log^2(n)}^{\infty} \frac{C_1 \cdot n}{ k^{1+1/d}} +\sum_{k=1}^{n}\frac{C_2 n^{d/(d+1)}}{ k}+\frac{\varepsilon n}{3}
    \leq \varepsilon n\;,
\end{align*}
where the first inequality holds by the definition of $\eta$ and as $\sum_{k=1}^{n}e^{-\delta(k)}$ is a geometric series, the second inequality holds by \eqref{eq:exp_sum_powers} with $\delta = \eta \delta(k)$, where $C_1,C_2$ are constants depending on $d,\varepsilon$, and by assuming $n$ is large enough so that $(1-e^{\delta(1)})^{-1}\leq \varepsilon n /3$; the third inequality holds provided $n$ is large enough so that $\sum_{k=\log^2(n)}^{\infty}1/k^{1+1/d}\leq \frac{\varepsilon}{3C_1}$ and $C_2 n^{d/(d+1)}\log(n)\leq \frac{\varepsilon n}{6}$. \qed

\section{Proof of the upper bound in Theorem \ref{thm:primes}}\label{sec:mainproof}
As before, we will need an evaluation of an `exponential sum'; this time it will run over all primes. We will state this as a Lemma \ref{lem:prime_sum} and postpone its proof for after the proof of the upper bound in Theorem \ref{thm:primes}. We also remark that in this section we only use the upper bound in the following lemma, and the lower bound is used in the following section.

\begin{lemma}\label{lem:prime_sum}
We have $\sum_{p\in \mathcal{P}} p e^{-\delta p} \sim {\delta^{-2}}/{\log(1/\delta)}$ as $\delta \to 0$.
\end{lemma}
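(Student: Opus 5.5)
We prove Lemma \ref{lem:prime_sum} by partial (Abel) summation against the prime-counting data, using only the prime number theorem in the weak form $\theta(x)=\sum_{p\le x}\log p\sim x$. (If we want to stay fully elementary, Chebyshev-type bounds do not suffice for an asymptotic, so some form of the PNT is needed; we assume it.) The heuristic is that primes near $x$ have density $1/\log x$. The mass of $x e^{-\delta x}$ sits at $x\asymp 1/\delta$, where $\log x\approx\log(1/\delta)$. Hence the sum should behave like $\frac{1}{\log(1/\delta)}\int_0^\infty x e^{-\delta x}\,dx=\frac{\delta^{-2}}{\log(1/\delta)}$.

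\textbf{Step 1.} Fix $\varepsilon>0$ and set $L=\log(1/\delta)$. Split the primes into three ranges:
\begin{itemize}
\item small primes, $p\le \delta^{-1}L^{-3}$;
\item a middle range, $\delta^{-1}L^{-3}<p\le \delta^{-1}L^{2}$;
\item large primes, $p>\delta^{-1}L^2$.
\end{itemize}

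\textbf{Step 2 (tails).} For the small primes we use the trivial bound $\sum_{p\le y}p\le y^2$. This gives at most $\delta^{-2}L^{-6}=o(\delta^{-2}/L)$. For the large primes we bound by the sum over all integers $m>\delta^{-1}L^2$. Comparing with the integral, this is at most roughly $\delta^{-2}(L^2+1)e^{-L^2}$, which is negligible.

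\textbf{Step 3 (middle range).} Here $\log p=L+O(\log L)=(1+o(1))L$. Write
\[
\sum_{p} pe^{-\delta p}=\sum_p \frac{p}{\log p}e^{-\delta p}\log p ,
\]
and apply Abel summation with $\theta$ and the smooth weight $g(x)=\frac{x}{\log x}e^{-\delta x}$. This yields boundary terms plus $-\int \theta(x)g'(x)\,dx$. Substitute $\theta(x)=x+o(x)$. The main term $-\int x g'(x)\,dx=\int g(x)\,dx+{}$boundary terms, and $\int g=(1+o(1))L^{-1}\int_{\text{range}} xe^{-\delta x}\,dx=(1+o(1))\delta^{-2}/L$. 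For the error term we need $\int |g'(x)|\,x\,dx=O(\delta^{-2}/L)$. This holds because $g$ is unimodal on the range: it increases up to about $1/\delta$ and decreases afterwards. So $\int|g'|\,x$ is controlled by $\max(xg)$ plus $\int g$, both of order $\delta^{-2}/L$. The boundary terms are also $o(\delta^{-2}/L)$ by the choice of cutoffs.

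\textbf{Main obstacle.} The hardest part is the bookkeeping in Step 3: checking that the $o(x)$ error in $\theta$ integrated against $|g'|$ stays $o(\delta^{-2}/L)$ uniformly, and that replacing $\log x$ by $L$ costs only a factor $1+O(\log L/L)$. The unimodality of $g$, analogous to that of $f_\delta$ in Section \ref{sec:proofpowers}, is what makes the total variation of $g$ manageable.

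An alternative, in the spirit of \eqref{eq:floors_powers}, is to cut the middle range into dyadic blocks $[y,2y]$. On each block we use $\pi(2y)-\pi(y)\sim y/\log y$ together with the monotonicity of $xe^{-\delta x}$ within the block, and then sum the resulting Riemann sums. Both routes give the upper and lower bounds simultaneously.
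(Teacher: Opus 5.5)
Your proof is correct, and it takes a genuinely different route from the paper's. The paper uses the prime number theorem in the form $p_m\sim m\log m$ for the $m$-th prime, and replaces $\sum_p pe^{-\delta p}$ by the sum over all integers $\sum_{m\ge 2}f_\delta(m)$, where $f_\delta(x)=x\log x\,e^{-\delta x\log x}$. It compares that sum with $\int_1^\infty f_\delta$ via unimodality, exactly as for $x^de^{-\delta x^d}$ in Section~\ref{sec:proofpowers}. It then evaluates the integral with the substitution $z=\delta x\log x$. This produces $\delta^{-2}\int ze^{-z}/\log(z/\delta)\,dz$ and forces a second split of the $z$-range in order to replace $\log(z/\delta)$ by $\log(1/\delta)$.

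You instead stay on the primes. You freeze $\log p$ at $L$ by working in the window $[\delta^{-1}L^{-3},\delta^{-1}L^{2}]$, and apply Abel summation against $\theta$. The main term is then exactly $\int_a^b g$, since the boundary terms cancel, and the only integral left is $\int xe^{-\delta x}\,dx$. The price is the bound $\int_a^b x|g'|\le 2x_0g(x_0)+\int_a^b g$ at the maximiser $x_0\approx 1/\delta$. Your unimodality remark supplies this correctly: the term $\frac1x(1-\frac1{\log x})$ in $(\log g)'$ is decreasing once $\log x>(1+\sqrt5)/2$.

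Your route has two advantages. The PNT enters only through $\sup_{x\ge a}|\theta(x)-x|/x\to 0$, and it gives the upper and lower bounds at once. By contrast, the paper only asserts the step $\sum_p pe^{-\delta p}\sim\sum_m f_\delta(m)$. Since $p_m\sim m\log m$ does not control $e^{-\delta p_m}$ uniformly in $m$, that step needs a small extra argument. For instance, use $p_me^{-\delta p_m}\le(1+\epsilon)f_{(1-\epsilon)\delta}(m)$ for $m\ge m_0$, together with the scaling of $\delta^{-2}/\log(1/\delta)$. The paper's route, for its part, keeps the lemma inside the same ``unimodal sum versus integral'' template used for plane partitions and powers.

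One caveat concerns only your closing alternative. Dyadic blocks are too coarse for an asymptotic, because $xe^{-\delta x}$ changes by a factor bounded away from $1$ across $[y,2y]$; on $[1/\delta,2/\delta]$ that factor is $e/2$. So that version yields only the order of magnitude. You would need blocks $[y,(1+\epsilon)y]$ together with $\pi((1+\epsilon)y)-\pi(y)\sim\epsilon y/\log y$.
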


To prove the theorem, we will also need the following elementary inequality.

\begin{lemma}\label{lem:Taylor_series}
    Let $f(x)=\sqrt{x/\log(10x)}$. Then, for every $m\geq 1$ and $0\leq x\leq m-1$ we have:
    \[
        f(m-x)\leq f(m)-xf'(m)\;.
    \]
    For all $m>100$ and $0\leq x\leq m/10$ we have
    \[
        f(m-x)\geq f(m)-xf'(m)+x^2f''(m)\;.
    \]
\end{lemma}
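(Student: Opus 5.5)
The plan is to compute $f'$ and $f''$ explicitly, show that $f$ is concave on $[1,\infty)$, and then get both inequalities from Taylor's theorem. Write $L=L(x)=\log(10x)$, so that $f(x)=x^{1/2}L^{-1/2}$ and $L'=1/x$. A direct differentiation gives
\[
f'(x)=\tfrac12 x^{-1/2}\left(L^{-1/2}-L^{-3/2}\right),
\]
and differentiating again (the two $L^{-3/2}$ terms cancel) gives
\[
f''(x)=-\tfrac14\, x^{-3/2}L^{-5/2}\left(L^{2}-3\right).
\]
For $x\ge 1$ we have $L\ge \log 10>2.3$, so $L^2>3$ and hence $f''(x)<0$. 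Thus $f$ is concave on $[1,\infty)$.

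\textbf{First inequality.} Let $m\ge 1$ and $0\le x\le m-1$, so that $m-x\in[1,m]$. Concavity of $f$ on $[1,m]$ means the tangent line at $m$ lies above the graph there. This gives $f(m-x)\le f(m)+(-x)f'(m)=f(m)-xf'(m)$, which is the first claim. Equivalently, it is Taylor's theorem with Lagrange remainder $\tfrac{x^2}{2}f''(\xi)\le 0$.

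\textbf{Second inequality.} Now let $m>100$ and $0\le x\le m/10$. Taylor's theorem gives
\[
f(m-x)=f(m)-xf'(m)+\tfrac{x^2}{2}f''(\xi)
\]
for some $\xi\in[m-x,m]\subseteq[0.9m,m]$. Since both $f''(\xi)$ and $f''(m)$ are negative, it suffices to show $\tfrac12 f''(\xi)\ge f''(m)$, that is, $|f''(\xi)|\le 2|f''(m)|$. Write $|f''(y)|=\tfrac14 y^{-3/2}L(y)^{-1/2}\bigl(1-3/L(y)^2\bigr)$ and compare the three factors at $y=\xi$ and $y=m$:
\begin{itemize}
\item $(m/\xi)^{3/2}\le(10/9)^{3/2}<1.18$;
\item the factor $1-3/L^2$ is increasing in $L$ and $L(\xi)\le L(m)$, so its ratio is at most $1$;
\item $\sqrt{L(m)/L(\xi)}\le \sqrt{1+\log(10/9)/L(\xi)}$, and $L(\xi)\ge\log 900>6.8$, so this ratio is below $1.01$.
\end{itemize}
The product of the three ratios is less than $1.2<2$, which gives the claim.

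I expect the main obstacle to be only the bookkeeping in computing $f''$. Once its closed form is in hand, both parts reduce to concavity and to the crude ratio bound above, and the hypotheses $m>100$ and $x\le m/10$ are comfortably sufficient.
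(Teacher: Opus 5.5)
Your proof is correct and takes essentially the same route as the paper. For the first part you use concavity, i.e.\ the Lagrange remainder with $f''<0$. For the second part the key step is the comparison $f''(y)\ge 2f''(m)$ for $y\in[0.9m,m]$. The paper phrases this as $g_m''(x)=f''(m-x)-2f''(m)>0$, where $g_m(x)=f(m-x)-f(m)+xf'(m)-x^2f''(m)$ satisfies $g_m(0)=g_m'(0)=0$, whereas you feed the same comparison directly into the Lagrange remainder; your three-factor ratio bound checks it more cleanly than the paper's chain of inequalities.
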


\begin{proof}
By Lagrange's remainder theorem, for every $m>1$ and $x$ such that $0< x\leq m$ there is $m-x\leq c\leq m$ such that
\begin{align*}
    f(m-x)&=f(m)- xf'(m)+x^2f''(c)/2\;.
\end{align*}
Noting that $f''(z)=\frac{3-\log^2(10z)}{4z^{3/2}\log^{5/2}(10z)}$ is negative for all $z>e^{\sqrt{3}}/10 \approx 0.565$ we obtain the first part of the lemma for $m>e^{\sqrt{3}}/10$ and $x>0$ with $m-x>e^{\sqrt{3}}/10$.

For the second part of the lemma, for every $m$ and $x\leq m$ we let
\[
    g_m(x)\coloneqq f(m-x)-f(m)+xf(m)-x^2f''(m)\;.
\]
Note that $g_m(0)=g_m'(0)=0$. Hence, proving that for all $m\geq 100$ and $0\leq x\leq m/10$ we have $g''_m(x)>0$ implies the second part of the lemma.
Indeed, let $m\geq 100$ and let $x=\varepsilon m $ with $0\leq \varepsilon\leq 1/10$. We have
\begin{align*}
    g''_m(\varepsilon m)&=\frac{3-\log^2((1-\varepsilon)\cdot 10m)}{4(1-\varepsilon)^{3/2}m^{3/2}\log^{5/2}((1-\varepsilon)\cdot 10m)}-\frac{3-\log^2(10m)}{2m^{3/2}\log^{5/2}(10m)}\\
    &\geq \frac{1}{4m^{3/2}}\left(\frac{1}{(1-\varepsilon)^{3/2}}\cdot \frac{3-\log^{2}(9m)}{\log^{5/2}(9m)}- 2\cdot \frac{3-\log^{2}(10m)}{\log^{5/2}(10m)}\right)\\
    &\geq \frac{1}{4m^{3/2}}\cdot {\frac{2(1-1/10)-(1-\varepsilon)^{-3/2}(1+1/10)}{\sqrt{\log(10m)}}}>0
\end{align*}
where the first inequality holds as $\frac{3-\log^2(10x)}{\log^{5/2}(x)}$ is monotone increasing for $x>e^{\sqrt{15}}$ and as $(1-\varepsilon)10 m\geq 90\geq e^{\sqrt{15}}$, the second inequality holds as $m>100$ which implies both $\frac{3-\log^2(9m)}{\log^{5/2}(9m)}\geq \frac{-(1+1/10)}{\sqrt{\log(10m)}}$ and $\frac{3-\log^2{(10m)}}{\log^{5/2}(10m)}\leq \frac{-(1-1/10)}{\sqrt{\log(10m)}}$, and the last inequality holds as $\varepsilon\leq 1/10$.
\end{proof}

\begin{proof}[Proof of Theorem \ref{thm:primes} (upper bound)]
Set $c=2\pi/\sqrt{3}$ and fix any $0 < \varepsilon<1/2$. We will prove by induction on $n$ that $pp(n) \leq K \cdot e^{(1+\varepsilon)c\sqrt{n/\log(10n)}}$, where $K=K(\varepsilon)$. Note that by choosing $K$ large enough, we can assume that the induction assumption holds for all $n \leq n_0(\varepsilon)$, allowing us to assume in what follows that $n \geq n_0(\varepsilon)$. To simplify the notation, we set\footnote{For $x=0$ we define $f(x)=0$.} $f(x)=\sqrt{x/\log(10x)}$ and denote its derivative by $f'(x)=\frac{1-1/\log(10x)}{2\sqrt{x\log(10x)}}$.
We now have the following inequality:
\begin{align*}
    n\cdot pp(n) &= \sum_{p\in \mathcal{P}\cap[n]} p\sum _{k=1}^{n/p} pp(n-pk) \leq \sum_{p\in \mathcal{P}\cap[n]} p\sum _{k=1}^{n/p} K e^{(1+\varepsilon)cf(n-pk)}\leq Ke^{(1+\varepsilon)cf(n)}\sum _{k=1}^{n} \sum_{p\in \mathcal{P}} pe^{-(1+\varepsilon)ckpf'(n)}\;.
\end{align*}
where the equality holds by Lemma \ref{lem:recursion}, the first inequality holds by the induction hypothesis, the second inequality holds by Lemma \ref{lem:Taylor_series} assuming $n$ is large enough and also by a direct computation when $pk=n$.

To finish the proof, we need to show that the double sum above is at most $n$.
Clearly, for $n$ large enough $(1+\varepsilon)cf'(n)>0$ and thus, the double sum above is at most $A+B$ where
\[
    A=  \sum _{k=1}^{\log^{2}(n)} \sum_{p\in \mathcal{P}} pe^{-{(1+\varepsilon)ckpf'(n)}} \quad\text{and}\quad B=\sum_{k=\log^{2}(n)}^{n}\sum_{a=1}^{\infty} ae^{-kaf'(n)}\;.
\]
\paragraph{Bounding $A$:}
By Lemma \ref{lem:prime_sum} there is a $\delta_0=\delta_0(\varepsilon)$ so that $ \sum_{p\in \mathcal{P}} p e^{-\delta p} \leq (1+\varepsilon/12)\delta^{-2}/\log(1/\delta)$ holds for every $\delta < \delta_0$. Assume $n$ is large enough so that  ${(1+\varepsilon)c\log^{2}(n)f'(n)}< \delta_0(\varepsilon)$. Then, \begin{align*}\label{eq:after_integral}
     A&\leq  \sum_{k=1}^{\log^{2}(n)} \frac{(1+\varepsilon/12)f'(n)^{-2}}{(1+\varepsilon)^2c^2\log\left(\frac{1}{(1+\varepsilon)ckf'(n)}\right)k^2}\leq \frac{8(1+\varepsilon/12)^2n}{(1+\varepsilon)^2c^2}\sum_{k=1}^{\infty} \frac{1}{k^2}\leq  \left(1-\varepsilon\right)n\;.
\end{align*}
The first inequality uses Lemma \ref{lem:prime_sum} with $\delta=(1+\varepsilon)ckf'(n)~~(<\delta_0)$, and the second inequality holds as we have $\frac{f'(n)^{-2}}{-\log\left({(1+\varepsilon)ckf'(n)}\right)}\leq 8(1+\varepsilon/12)n$ for all $1\leq k\leq \log^{2}(n)$, and large $n$.

\paragraph{Bounding $B$:}
Since $A \leq (1-\varepsilon)n$, it remains to prove that $B \leq \varepsilon n$. Indeed,
\begin{align*}
    B = \sum_{k=\log^{2}(n)}^{n}\frac{e^{-kf'(n)}}{\left(1-e^{-kf'(n)}\right)^2}
    \leq \frac{4n\log(10n)}{(1-1/\log(10n))^{2}}\sum _{k=\log^{2}(n)}^{n} \frac{1}{k^{2}}\leq \frac{5n\log(10n)}{\log^2(n)-1}\leq \varepsilon n\;.
\end{align*}
The equality holds as $\sum_{k=1}^{\infty}kt^k=\frac{t}{(1-t)^2}$, the first inequality uses the elementary
fact 
(see Lemma \ref{lem:estimations}) $\frac{e^{-x}}{(1-e^{-x})^2}\leq \frac{1}{x^2}$, and the second inequality holds as for all $t>1$ we have $\sum_{s=t}^{\infty}\frac{1}{s^2}\leq \int_{t-1}^{\infty}\frac{1}{x^2}dx=\frac{1}{t-1}$.
\end{proof}

\begin{proof}[Proof of Lemma \ref{lem:prime_sum}]
We start by showing the upper bound, that is, for every $\varepsilon>0$ there exists $\delta_0>0$ such that for all $\delta<\delta_0$ we have
\[
    \sum_{p\in\mathcal{P}}pe^{-p\delta}\leq \frac{1+\varepsilon}{\delta^2\log(1/\delta)}\;.
\]

First, we show that for every $\xi>0$ there exists $\eta_0>0$ such that for all $\eta<\eta_0$ the following holds:
\begin{equation}\label{eq:evaluating_the_integral_primes_upper}
        \int_{1}^{\infty}x\log(x)e^{-\eta x\log(x)}dx\leq \frac{1+\xi}{\eta^2\log(1/\eta)}\;.
\end{equation}
Indeed, let $0<\xi<1$ and let $a>0$ be sufficiently large so that $\log(x)+1\geq (1-\xi/8)\log(x\log(x))$ for all $x\geq a$. Then,
\begin{align*}\label{eq:evaluation_with_integral}
    \int_{1}^{\infty}x\log(x)e^{-\eta x\log(x)}dx &\leq \int_{1}^{a}x\log(x)e^{-\eta x\log(x)}dx+\frac{1}{(1-\xi/8)\eta^2}\int_{\eta a\log(a)}^{\infty}\frac{z e^{-z}}{\log(z/\eta)}dz\nonumber\\
    &\leq \int_{1}^{a}x^2dx+\frac{1+\xi/4}{\eta^2}\left(\int_{\eta a\log(a)}^{1/\log^2(1/\eta)}\frac{ze^{-z}}{\log(z/\eta)}dz+\int_{1/\log^2(1/\eta)}^{\infty}\frac{ze^{-z}}{\log(z/\eta)}dz\right)\nonumber\\
    &\leq \frac{\xi/2}{\eta^2\log(1/\eta)}+\frac{1+\xi/4}{\eta^2}\left(\frac{C}{\log^2(1/\eta)}+\frac{1}{\log(1/\eta)-2\log\log(1/\eta)}\int_{0}^{\infty}ze^{-z}dz\right)\\
    &\leq \frac{1+\xi}{\eta^2\log(1/\eta)}\;,
\end{align*}
where the first inequality holds by the substitution $z=\eta x \log x$, which, by the choice of $a$, guarantees that
$dz=\eta(\log(x)+1)dx\geq \eta(1-\xi/8)\log(z/\eta)dx$, the third inequality holds for some constant $C$ that depends only on $a$, and provided $\eta_0$ is small enough, the last inequality holds provided $\eta$ is sufficiently small and the fact that $\int_{0}^{\infty}ze^{-z}dz=1$.

Next, for every $\delta,x>0$ let $f_\delta(x)=x\log(x)e^{-\delta x\log(x)}$. We now show that for any $\xi>0$ there exists $\eta_0>0$ so that for all $\eta<\eta_0$ we have
\begin{equation}\label{eq:sum_to_int}
    \sum_{m=2}^{n} f_\eta(m)\leq \int_{1}^{\infty}f_\eta(x) dx+\frac{\xi}{\eta^2\log(1/\eta)}\;.
\end{equation}
Choosing the parameters appropriately, the prime number theorem, \eqref{eq:evaluating_the_integral_primes_upper}, and \eqref{eq:sum_to_int} assert that
\[
    \sum_{p\in \mathcal{P}}pe^{-\delta p}\sim \sum_{m=2}^{\infty}f_\delta(m)\leq \int_{1}^{\infty}f_{\delta}(x)dx+\frac{\varepsilon/2}{\delta^2\log(1/\delta)}\leq \frac{1+\varepsilon}{\delta^2\log(1/\delta)} \;.
\]
which finishes the proof of the upper bound. To prove \eqref{eq:sum_to_int}, note that for any fixed $\eta$ we have
$\frac{d}{dx}f_\eta(x)=-(\log(x)+1)(\eta x\log(x)-1)e^{-\eta x\log(x)}$.
Hence, provided $\eta$ is small enough and letting $b\sim \eta^{-1}/\log(1/\eta)>1/e$ be such that $f_\eta(x)$ is maximal, we have the following as $f_\eta(x)$ is increasing from $1/e$ to $b$, and decreasing from $b$ to infinity:
\[
    \sum_{m=2}^{\lfloor b \rfloor -1}f_\eta(m)\leq \int_{1}^{\lfloor b\rfloor}f_\eta(x)dx\quad \text{and}\quad \sum_{m=\lfloor b \rfloor+2}^{\infty} f_\eta(m)\leq \int_{\lfloor b\rfloor}^{\infty}f_\eta(x) dx\;.
\]

We see that the integral $\int_{0}^{\infty}f_\eta(x)dx$ accounts for $\sum_{m=1}^{\infty}f_\eta(m)$ except possibly for $f(\lfloor b\rfloor)$ and $f(\lfloor b\rfloor +1)$, hence, we obtain \eqref{eq:sum_to_int} as follows
\begin{align*}
    \sum_{m=2}^{\infty}f_\eta(m)&\leq \int_{1}^{\infty}f_\eta(x)dx+2f_\eta(b)=\int_{1}^{\infty}f_\eta(x)dx+\frac{2}{\eta e}\leq \int_{1}^{\infty}f_\eta(x)dx+\frac{\xi}{\eta^2\log(1/\eta)}\;,
\end{align*}
where the first inequality holds by the above inequalities and the definition of $b$ being the maximal point of $f_\eta(x)$, and the second inequality holds provided $\eta$ is sufficiently small.

The proof of the lower bound is almost identical, therefore, we will only sketch it. One can obtain analogous inequalities to \eqref{eq:evaluating_the_integral_primes_upper} and \eqref{eq:sum_to_int}, that is
\[
    \int_1^{\infty}f_\eta(x)dx\geq \frac{1-\xi}{\eta^2\log(1/\eta)}\quad \text{and}\quad
    \sum_{m=2}^{n} f_\eta(m)\geq \int_{1}^{\infty}f_\eta(x)dx-\frac{\xi}{\eta^2\log(1/\eta)}\;.
\]
Using the prime number theorem and the above inequalities, we derive the lower bound, similar to the upper bound.
\end{proof}

\section{Proof of the lower bound in Theorem \ref{thm:primes}}\label{sec:proofprimelower}
Set $c=2\pi/\sqrt{3}$ and fix any $0 < \varepsilon<1/6$. We will prove by induction on $n$ that $pp(n) \geq \frac{1}{K}e^{(1-\varepsilon)c\sqrt{n/\log(10n)}}$, where $K=K(\varepsilon)$. Note that by choosing $K$ large enough, we can assume that the induction assumption holds for all $n \leq n_0(\varepsilon)$, allowing us to assume in what follows that $n \geq n_0(\varepsilon)$. To simplify the notation, we set\footnote{For $x=0$ we define $f(x)=0$.} $f(x)=\sqrt{x/\log(10x)}$, denote its derivative by $f'(x)=\frac{1-1/\log(10x)}{2\sqrt{x\log(10x)}}$, and its second derivative by $f''(x)=\frac{-1+3/\log^2(10x)}{4x\sqrt{x\log(10x)}}$. We now have
\begin{align}\label{eq:less_than_n_primes}
    n\cdot pp(n) &= \sum_{p\in \mathcal{P}\cap[n]} p\sum _{k=1}^{n/p} pp(n-pk) \geq \sum_{p\in \mathcal{P}\cap[n]} p\sum _{k=1}^{n/p} \frac{1}{K} e^{(1-\varepsilon)cf(n-pk)}\nonumber\\
    &\geq \frac{1}{K}e^{(1-\varepsilon)cf(n)}\sum _{k=1}^{\log(n)} \sum_{\mathcal{P}\ni p\leq n^{3/4}} pe^{-(1-\varepsilon)ckpf'(n)}\cdot e^{2ck^2p^2f''(n)}\nonumber \\
    &\geq \frac{1}{K}e^{(1-\varepsilon)cf(n)}\sum _{k=1}^{\log(n)} \sum_{\mathcal{P}\ni p\leq n^{3/4}} pe^{-(1-\varepsilon)ckpf'(n)}(1+2ck^2p^2f''(n))\;,
\end{align}
where the equality holds by Lemma \ref{lem:recursion}, the first inequality holds by the induction hypothesis, the second inequality holds by Lemma \ref{lem:Taylor_series} with
$x=pk$ while noting that for large enough $n$ we have $pk\leq n^{3/4}\log(n) \leq n/10$, and the last inequality holds as for all $x$ we have $1+x\leq e^{x}$.

Hence, to complete the proof it remains to establish that the double sum in \eqref{eq:less_than_n_primes} is bounded from below by $n$.
The double sum in \eqref{eq:less_than_n_primes} is at least $A-B+C$ where (note that $f''(n) < 0 < f'(n)$)
\[
    A= \sum _{k=1}^{\log(n)}\sum_{p\in \mathcal{P}} pe^{-(1-\varepsilon) ckp f'(n)} \quad\text{,}\quad B=\sum_{k=1}^{\log(n)}\sum_{m=n^{3/4}}^{\infty} me^{- ckm f'(n)/2}\;,
\]
\[
    C=\sum _{k=1}^{\log(n)}\sum_{m=1}^{\infty}  2c k^2 m^3f''(n)e^{- 5ckm f'(n)/6}\;.
\]

\paragraph{Bounding $A$:}
By Lemma \ref{lem:prime_sum} there is a $\delta_0=\delta_0(\varepsilon)$ so that $\sum_{p\in \mathcal{P}}pe^{-\delta p} \geq  {(1-\varepsilon/4)\delta^{-2}}/{\log(1/\delta)}$ holds for every $\delta < \delta_0$.
Assume $n$ is large enough so that $(1-\varepsilon/2)c  \log(n) f'(n)\leq \delta_0$. Then,
\begin{align*}
     A&\geq  \sum_{k=1}^{\log(n)} \frac{(1-\frac{\varepsilon}{4})f'(n)^{-2}}{(1-\frac{\varepsilon}{2})^2c^2 \log\left(\frac{1}{(1-\frac{\varepsilon}{2})c k f'(n)}\right)k^2} \geq \frac{8(1-\frac{\varepsilon}{4})^{3/2}n}{(1-\frac{\varepsilon}{2})^2c^2}\sum_{k=1}^{\log(n)} \frac{1}{k^2} \geq \left(1+\frac{\varepsilon}{2}\right)n\;,
\end{align*}
where the first inequality holds by Lemma \ref{lem:prime_sum} applied with $\delta=(1-\varepsilon/2)c k f'(m) ~~(\leq \delta_0)$, the second inequality holds as $\frac{f'(n)^{-2}}{-\log\left({(1-\varepsilon/2)c k f'(n)}\right)}\geq 8\sqrt{1-\varepsilon/4} \cdot n$ for all $1\leq k\leq \log(n)$ and large $n$. The last inequality holds provided $n$ is large enough so that $\sum_{k=1}^{\log(n)}\frac{1}{k^2}\geq \sqrt{1-\varepsilon/4}\cdot \pi^2/6$.

\paragraph{Bounding $B$:}
Observe that the following holds for large enough $n$:
\begin{align*}
    B&\leq 2\log(n)n^{3/4}\cdot \frac{e^{- c f'(n) n^{3/4}/2}}{\left(1-e^{-cf'(n)/2}\right)^2}\leq  \frac{ 2\log(n) n^{3/4} \cdot e^{-cf'(n) n^{3/4}/2}}{c^2\left(f'(n)-f'(n)^2\right)^2/4} \leq \frac{1}{4}\varepsilon n\;,
\end{align*}
where the first inequality holds as for all $M\geq 1$ and $0<x<1$ we have\footnote{Note that $\sum_{k=M}^{\infty} k x^{k} = x^{M}\sum_{k=0}^{\infty}(k+M)x^{k} =x^{M}\sum_{k=0}^{\infty}kx^{k} + Mx^{M}\sum_{k=0}^{\infty}x^{k} = \frac{Mx^{M}}{(1-x)^{2}}+\frac{Mx^{M}}{1-x}  \leq \frac{(M+1)x^M}{(1-x)^2}$.} $\sum_{k=M}^{\infty} k x^{k}\leq \frac{(M+1)x^{M}}{(1-x)^2}$, the second inequality holds as for all $0<x<1$ we have $(1-e^{-x})^2>(x-x^2)^2$.

\paragraph{Bounding $C$:}
Since $A-B\geq (1+\varepsilon/4)n$, it is enough to prove that $C\geq -\frac{1}{4}\varepsilon n$. Indeed,
\begin{align*}
    C&\geq cf''(n) \sum _{k=1}^{\log(n)}k^2\frac{12e^{-5c k f'(n)/6}}{\left(1-e^{-5c k f'(n)/6}\right)^4} \geq\frac{6^6 f''(n)}{5^4c^3f'(n)^4}  \sum_{k=1}^{\infty} \frac{1}{k^2}\geq -\frac{1}{4}\varepsilon n\;,
\end{align*}
where the first and second inequalities hold as for all $0<x< 1$ we have\footnote{This follows by taking the derivative of the identity $\sum_{k=0}^{\infty}x^{k}=\frac{1}{1-x}$ three times with respect to $x$.} $\sum_{k=0}^{\infty}k^3x^{k}\leq \frac{6x}{(1-x)^4}$ and\footnote{See Lemma \ref{lem:estimations} in the appendix for a short proof.} $e^{-x}/(1-e^{-x})^4\leq 3/x^4$ for $0<x<1$. The last inequality holds for large enough $n$. \qed

\appendix
\section{Some Elementary inequalities}

\begin{lemma}\label{lem:estimations}
For all $x>0$ we have
\begin{equation}\label{eq1}
    \frac{e^x}{(e^x-1)^2}\leq \frac{1}{x^2}\;,
\end{equation}
and
\begin{equation}\label{eq2}
    \frac{e^{-x}(e^{-x}+1)}{(1-e^{-x})^3}\leq \frac{2}{x^3}\;,
\end{equation}
and for all $0<x<1$ we have
\begin{equation}\label{eq4}
        \frac{e^{-x}}{(1-e^{-x})^4}=\frac{e^{3x}}{(e^x-1)^4}\leq \frac{3}{x^4}\;.
    \end{equation}
\end{lemma}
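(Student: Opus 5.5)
All three inequalities will be reduced, by clearing denominators and passing to the half-argument $t=x/2$, to statements about $\sinh$ and $\cosh$. Those are then settled by comparing Taylor coefficients. The key identities are
\[
e^x-1=2e^{t}\sinh t,\qquad e^x+1=2e^{t}\cosh t,\qquad t=x/2,
\]
together with the elementary bound $\sinh t\ge t$ for $t\ge 0$, which holds because all Taylor coefficients of $\sinh$ are nonnegative.

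For \eqref{eq1}, write $(e^x-1)^2=4e^{x}\sinh^2 t$. The left-hand side then becomes $1/(4\sinh^2 t)$. Since $4t^2=x^2$, the claim is exactly $\sinh t\ge t$, so it holds.

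For \eqref{eq4}, we have
\[
\frac{e^{-x}}{(1-e^{-x})^4}=\frac{e^{3x}}{(e^x-1)^4}.
\]
Using $e^x-1=2e^t\sinh t\ge 2te^t$, the left-hand side is at most
\[
\frac{e^{6t}}{16t^4e^{4t}}=\frac{e^{2t}}{16t^4}=\frac{e^{x}}{x^4}.
\]
For $0<x<1$ this is at most $e/x^4<3/x^4$, which is what we want.

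For \eqref{eq2}, first rewrite the left-hand side as $e^x(e^x+1)/(e^x-1)^3$. Substituting the identities above gives
\[
\frac{e^{2t}\cdot 2e^t\cosh t}{8e^{3t}\sinh^3 t}=\frac{\cosh t}{4\sinh^3 t}.
\]
The right-hand side equals $2/x^3=1/(4t^3)$. So \eqref{eq2} is equivalent to
\[
t^3\cosh t\le \sinh^3 t\qquad (t>0).
\]
This is the only step where $\sinh t\ge t$ is not enough, and I expect it to be the main obstacle. I would handle it by expanding both sides as power series with only odd powers $t^{2k+3}$, and comparing coefficients.

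On the right, use $\sinh^3 t=(\sinh 3t-3\sinh t)/4$. The coefficient of $t^{2k+3}$ is then $(3^{2k+3}-3)/\bigl(4(2k+3)!\bigr)$. On the left, the coefficient of $t^{2k+3}$ in $t^3\cosh t$ is $1/(2k)!$. The required coefficientwise inequality is therefore
\[
3^{2k+3}-3\ge 4(2k+1)(2k+2)(2k+3).
\]
Both sides are equal for $k=0$ (giving $24=24$) and for $k=1$ (giving $240=240$). For $k=2$ we get $2184\ge 840$.

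For the inductive step, passing from $k$ to $k+1$ multiplies $3^{2k+3}$ by $9$, so the left side grows by more than a factor of $9$. The right side grows by the factor
\[
\frac{(2k+3)(2k+4)(2k+5)}{(2k+1)(2k+2)(2k+3)},
\]
which is at most $(7\cdot 8\cdot 9)/(5\cdot 6\cdot 7)<9$ for $k\ge 2$ and keeps decreasing. Hence the inequality holds for all $k$, and summing over $k$ gives \eqref{eq2}.
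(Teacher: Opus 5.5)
Your proof is correct. It uses the same basic tool as the paper, comparing Taylor coefficients, but the substitution $t=x/2$ makes the decomposition noticeably cleaner.

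The paper clears denominators directly in the variable $x$:
\begin{itemize}
\item For \eqref{eq1} it compares $n(n-1)$ with $2^n-2$.
\item For \eqref{eq2} it compares $n(n-1)(n-2)(1+2^{n-3})$ with $2\cdot 3^n-6\cdot 2^n+6$. It handles $n\ge 20$ with a binomial-sum estimate and leaves $n\le 19$ to direct verification.
\item For \eqref{eq4} it proves a separate bound $e^{2x}/(e^x-1)^2\le x^{-2}+x^{-1}+1$ by another series comparison, then multiplies it by \eqref{eq1}.
\end{itemize}

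Your symmetrization replaces all of this:
\begin{itemize}
\item \eqref{eq1} becomes simply $\sinh t\ge t$.
\item \eqref{eq4} follows in one line from \eqref{eq1}, since $e^{3x}/(e^x-1)^4\le e^{x}/x^4<e/x^4$. No auxiliary inequality is needed.
\item The triple-angle identity reduces \eqref{eq2} to $3^{2k+3}-3\ge 4(2k+1)(2k+2)(2k+3)$. A short induction settles this for all $k$, with no finite case check.
\end{itemize}

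Your equalities at $k=0,1$ match the paper's coefficient equalities at $n=3,\dots,6$. Both reflect the same fact: $2(e^x-1)^3-x^3e^x(e^x+1)=16e^{3t}\bigl(\sinh^3 t-t^3\cosh t\bigr)=O(x^7)$, so \eqref{eq2} is tight to high order near $0$. Your formulation makes this visible, while the paper's hides it.

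The paper's route gains only uniformity: one mechanical recipe applied in $x$ to every inequality. Yours is shorter and fully self-contained.
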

\begin{proof}
The proofs of all inequalities are similar; we expand $e^x$ into its power series, and then compare the coefficients on both
sides of the inequality. Inequality \eqref{eq1} is equivalent to
$
    x^2e^x\leq e^{2x}-2e^{x}+1.
$
By expanding each side to its power series, it suffices to show that
\[
    \sum_{n=2}^{\infty}\frac{x^n}{(n-2)!} \leq \sum_{n=2}^{\infty}\frac{2^n-2}{n!}x^n\;.
\]
To see this, note that the coefficient of $x^n$ on the left-hand side is always at most the coefficient of $x^n$ on the right-hand side for all $n\geq 2$ as clearly $n(n-1)\leq 2^n-2$ for all $n\geq 2$.

To prove \eqref{eq2} note that it is equivalent to
\[
    x^3(e^x+e^{2x})\leq 2e^{3x}-6e^{2x}+6e^{x}-2\;.
\]
Replacing $e^x$ with its power series expansion, this is equivalent to
\[
    \sum_{n=3}^{\infty}\frac{1+2^{n-3}}{(n-3)!}x^{n}\leq
    \sum_{n=3}^{\infty}\frac{2\cdot 3^n-6\cdot 2^n+6}{n!}x^{n}\;.
\]
To establish this inequality for all $x>0$, we claim that the coefficient of $x^n$ in the right-hand series is at least as large as the coefficient of $x^n$ in the left-hand side. To prove this, we will show that for all $n \geq 3$ we have
\[
    n(n-1)(n-2)(1+2^n/8)+6\cdot 2^n\leq 2\cdot 3^n\;.
\]
First, for $n\geq 20$ we indeed have the above inequality as
$$
    n(n-1)(n-2)(1+2^{n-3})+6\cdot 2^n \leq  \binom{n}{5}{2^{n-4}}+\binom{n}{2}2^{n-1}+ n2^n+2^{n+1} \leq \sum_{k=0}^n \binom{n}{n-k}2^{k+1}= 2\cdot 3^n\;,
$$
where in the first inequality we used $6\cdot 2^{n}\leq n2^n+2^{n+1}$, which holds for all $n\geq 4$, and $n(n-1)(n-2)\leq \binom{n}{2}2^{n-1}$ which holds for all $n$, and also $ n(n-1)(n-2)\frac{2^n}{8}\leq \binom{n}{5}\frac{2^{n}}{16}$ which holds for all $n\geq 20$.
The first two inequalities we mentioned above are straightforward to prove, the third is equivalent to $(n-3)(n-4)\geq 240$, which holds for all $n\geq 20$ as then we have $(n-3)(n-4)\geq (n-4)^2\geq 16^2>240$.
For $n\leq 19$, one can verify that the coefficient of $x^n$ in the right-hand series is at least the coefficient of $x^n$ in the left-hand side series.

Lastly, we prove \eqref{eq4}. Clearly, it is enough to prove that $\frac{e^{2x}}{(e^{x}-1)^2}\leq \frac{1}{x^2}+\frac{1}{x}+1$, as then using inequality \eqref{eq1}, we are done.
The above is equivalent to proving that
    $
        2x^2e^{x}\leq (1+x)(e^{2x}-2e^{x}+1)+x^2.
    $
    By expanding each side to its power series, we wish to show that
    \[
        \sum_{n=4}^{\infty}\frac{2n(n-1)}{n!}x^n\leq \sum_{n=4}^{\infty} \frac{2^n-2+n(2^{n-1}-2)}{n!}x^{n}\;.
    \]
    As in previous cases, this holds since the coefficient of $x^{n}$ on the left-hand side is at most as large as the coefficient of $x^{n}$ on the right-hand side.
\end{proof}

\end{document}